\newcommand{\A}{\ensuremath{\operatorname{Aut}}}
\newcommand{\D}{\ensuremath{\operatorname{D}}}
\newcommand{\cf}{\mathfrak{c}}
\newtheorem{theorem}{Theorem}
\newtheorem{lemma}{Lemma}
\newtheorem{proposition}[theorem]{Proposition}
\newtheorem*{maintheorem}{Main Theorem}
\theoremstyle{definition}
\newtheorem{definition}{Definition}
\newtheorem*{acknowledgement}{Acknowledgement}
\theoremstyle{remark}
\newtheorem{example}[theorem]{Example}
\renewenvironment{proof}{\par\medskip\noindent{\bf Proof\ \ }}{\qed\medskip}
\begin{document}

\title{
	Distinguishing locally finite trees
}

\author{
	Svenja H\"{u}ning 
	\thanks{Technische Universit\"at Graz, 8010 Graz, Austria}
	\and Wilfried  Imrich 
	\thanks{Montanuniversit\"{a}t Leoben, 8700 Leoben, Austria}
	\and Judith Kloas 
	\footnotemark[1]
	\and Hannah Schreiber 
	\footnotemark[1]
	\and Thomas W.~Tucker 
	\thanks{Colgate University, Hamilton NY 13346, USA}
}
\date{\today}
\maketitle

% ==============================================================================
% AMS-KLASSIFIKATION
%
% 05C05 - Trees
% 05C15 - Coloring of graphs and hypergraphs
% 05C25 - Graphs and abstract algebra (groups, rings, fields, etc.)
% 05C63 - Infinite graphs
% ==============================================================================

%Keywords: {Distinguishing number, automorphisms, infinite graphs.}

\begin{abstract}
	The distinguishing number $\D(G)$ of a graph $G$ is the smallest number of 
	colors that is needed to color  the vertices of $G$ such that the only color 
	preserving automorphism is the identity. For infinite graphs $\D(G)$ is 
	bounded by the supremum of the valences, and for finite graphs by 
	$\Delta(G)+1$, where $\Delta(G)$ is the maximum valence. Given a finite or 
	infinite tree $T$ of bounded finite valence $k$ and an integer $c$, where 
	$2 \leq c \leq k$, we are interested in coloring the vertices of $T$ by $c$ 
	colors, such that every color preserving automorphism fixes as many vertices 
	as possible. In this sense we show that there always exists a $c$-coloring 
	for which all vertices whose distance from the next leaf is at least 
	$\lceil\log_ck\rceil$ are fixed by any color preserving automorphism, and 
	that one can do much better in many cases.
\end{abstract}

%%%%%%%%%%%%%%%%%%%%%%%%%%%%%%%%%%%%
\section{Introduction}
%%%%%%%%%%%%%%%%%%%%%%%%%%%%%%%%%%%%

This paper is concerned with automorphism breaking of finite and infinite trees 
of bounded valence by vertex colorings. 1977 Babai \cite{ba-77} showed
that the vertices of every $k$-regular tree, where $k \geq 2$ is an arbitrary 
cardinal, can be colored with two colors such that only the identity
automorphism preserves the coloring. For trees that are not regular such a 
$2$-coloring need not be possible. This raises the question of how many
colors are needed to break all automorphisms of a given tree $T$, that is, of 
finding the smallest cardinal $d$ to which there exists a
$d$-coloring of the vertices of $T$ that is only preserved by the identity 
automorphism.

Another question is to find,  for a given $c \geq 2$, $c$-colorings of the  
vertices of a given tree $T$ such that the color-preserving automorphisms fix 
subtrees of $T$ that are maximal in some sense. This is the problem, which we 
consider here.

Our note is related to \cite{imhu-2017}, where both questions were answered for 
connected graphs of maximum valence 3. That paper, in turn, was
motivated by the general problem of determining the distinguishing number of 
graphs and the Infinite Motion Conjecture of Tucker.

The \emph{distinguishing number} $\D(G)$ of a graph $G$ is the smallest cardinal 
number $d$ such that there exists a $d$-coloring of the vertices
of $G$ which is only preserved by the identity automorphism. We also say that 
such a coloring \emph{breaks} $\A(G)$ and that $G$ is 
\emph{$d$-distinguishable}.
These concepts were introduced 1996 by Albertson and Collins \cite{alco-96} and 
have spawned a series of related papers.
In two of them, by Collins and Trenk~\cite{CT06} and Klav\v{z}ar, Wong and 
Zhu~\cite{klwo-2006}, it is shown that the distinguishing number $\D(G)$
of any finite connected graphs $G$ of maximal valence $k$ is at most $k$, unless 
$G$ is $K_k$, $K_{k,k}$ or $C_5$. Then $\D(G) = k + 1$.
In~\cite{imkltr-2007} this was extended to infinite graphs. In that case $\D(G)$ 
is bounded by the supremum of the valences of the vertices.

Despite the fact that the distinguishing number can be arbitrarily large, it 
is~$1$ for asymmetric graphs. As almost all finite graphs are asymmetric, this 
means that almost all graphs have distinguishing number~$1$. Furthermore, almost 
all finite graphs that are not asymmetric have just one non-identity 
automorphisms, which is an 
involution\footnote{Florian Lehner, private communication.}. One can break it by 
coloring one selected vertex black and all others white. Clearly such graphs are 
$2$-distinguishable.

For infinite graphs we have the Infinite Motion Conjecture~\cite{tu-11}. It says 
that all connected, locally finite, infinite graphs are $2$-distinguishable if 
every non-identity automorphism moves infinitely many vertices. Despite the fact 
that it is true for many classes of graphs, for example for graphs of 
subexponential growth, see Lehner~\cite{le-15}, the conjecture is still open. 
Until recently it was not even clear whether it holds for graphs of maximum 
valence~$3$, but in~\cite{imhu-2017} it was shown that all connected infinite 
graphs of maximal valence 3 are 2-distinguishable and that no motion assumption 
is needed.

In the case of connected finite graphs $G$ of maximum valence~$3$ it is even 
enough to require that every automorphism moves at least three vertices to 
ensure $2$-distinguishability, unless $G$ is the cube or the Petersen graph. In 
fact, with the exception of $K_4$, $K_{3,3}$, the cube and the Petersen graph, 
all connected finite graphs $G$ of maximum valence~$3$ admit a $2$-coloring 
where every automorphism that preserves the coloring fixes all vertices, with 
the exception of at most one pair of interchangeable vertices. We say the $2$-
coloring fixes all but this pair of vertices.
For example, consider three copies of $K_{1,3}$, select a vertex of valence~$1$ 
in each copy and identify them. The resulting tree has distinguishing 
number~$3$, and it is easy to find a $2$-coloring that fixes all but two 
vertices.

In this note we generalize this to trees $T$ of maximal valence $k$ and an 
arbitrary number of colors $c$. We wish to find $c$-colorings that 
\emph{fix as many vertices as possible}, that is, we wish to maximize the sets 
of vertices that are fixed by each color preserving automorphism. As this is 
hard to control, we look for the smallest number $r(c,k)$ such that there exists 
a $c$-coloring of $T$ that fixes in the worst case at least all vertices of $T$ 
whose distance from the next leaf is at least 
$r(c,k) = \left\lceil \log_c k \right\rceil$. This is made more precise in 
Section~\ref{sec: main theorem}, where one can see that in a lot of cases this 
number will be significantly smaller.

%%%%%%%%%%%%%%%%%%%%%%%%%%%%%%%%%%%%
\section{Preliminaries}
%%%%%%%%%%%%%%%%%%%%%%%%%%%%%%%%%%%%

\subsection{Graph representation}

Let $G = (V(G),E(G))$ be a connected graph, $V(G)$ its set of vertices and 
$E(G)$ its set of edges. To simplify the notation we write $V$ and $E$ if the 
graph is clear by the context.
If the vertices of $G$ have maximal valence 3, then $G$ is called 
\emph{subcubic}.
As usual we call the number of edges on a shortest path between two vertices $u$ 
and $v$ be the \emph{distance} $d(u,v)$ between $u$ and $v$.

\begin{definition}
	Let $v \in V$. The \emph{ball} of radius $n$ with center $v$ is defined as 
	the set $B(n,v) = \{u \in V \,|\, d(u, w) \leq n\}$.
	The sphere of radius $n$ around $v$ is the set 
	$S(n,v) = \{u \in V \,|\, d(u, v) = n\}$, that means it is the set of all 
	vertices of distance $n$ from $v$, see Figure~\ref{fig:levels}.
\end{definition}

In this paper, we mostly represent a graph $G$ by an arrangement of spheres with 
a common center $v \in V(G)$, and say that $G$ is \emph{rooted} in $v$. In that 
context, a \emph{down-neighbor}, a \emph{cross-neighbor} and an 
\emph{up-neighbor} of $u \in S(n,v)$ is a neighbor $w$ of $u$ that it is in 
$S(n-1,v)$, $S(n,v)$ or $S(n+1,v)$ respectively; see Figure~\ref{fig:levels}. 
The corresponding edges are called \emph{down-edges}, \emph{cross-edges} and 
\emph{up-edges}.

\begin{definition}
	Two vertices $z,z'$ are \emph{siblings} if they have the same 
	down-neighbor. We call a vertex $w$ an \emph{only child} of a vertex $u$
	if $w$ is the only neighbor of $u$ of valence $1$.
\end{definition}

\begin{figure}[h]
	\centering
	\includegraphics[scale=0.9]{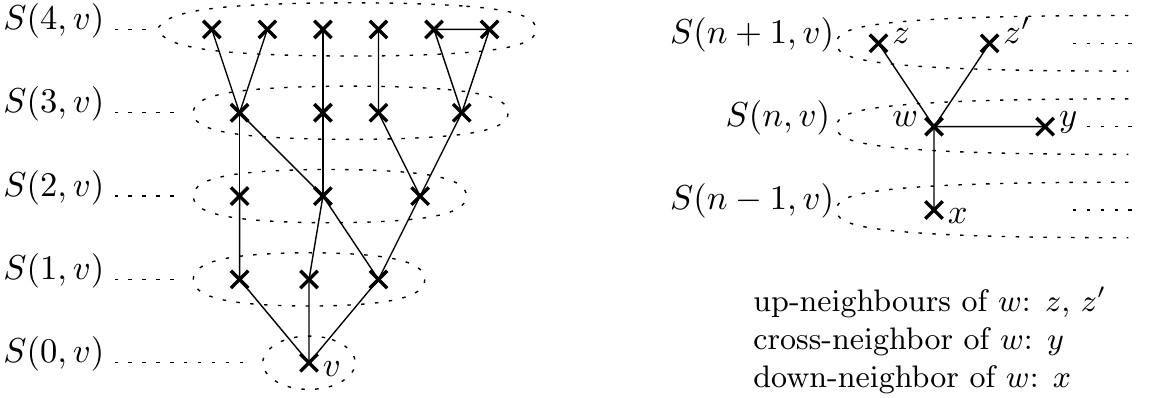}
	\caption{Decomposition of a graph into spheres centered at a vertex $v$. 
	Each vertex in $S(i,v)$ has distance i from $v$.} \label{fig:levels}
\end{figure}

\subsection{Ends and rays}

A \emph{subgraph} of a graph $G = (V(G), E(G))$ is a graph $H = (V(H), E(H))$ 
such that $V(H) \subseteq V(G)$ and $E(H) \subseteq E(G)$.
If $E(H)$ contains all edges between vertices of $V(H)$ that are also in $E(G)$, 
we say $H$ is an \emph{induced subgraph} of $G$ and denote it by 
$\langle V(H)\rangle$.
If $S \subset V(G)$, then $G \setminus S$ is the subgraph of $G$ induced by the 
vertices of the set $V(G) \setminus S$.

\begin{definition}
	A \emph{ray} is an infinite graph $R=(V, E)$, with 
	$V = \{v_0, v_1, v_2, ...\}$ and $E = \{v_0v_1, v_1v_2, v_2v_3, ...\}$
	where the $v_{i}$ are pairwise different.
	If a ray $R_1=(V_1, E_1)$ is a subgraph of a ray $R_2=(V_2, E_2)$, 
	then $R_1$ is called a \emph{tail} of $R_2$.
\end{definition}

Two rays $R_1$ and $R_2$ in a graph $G = (V(G), E(G))$ are \emph{equivalent}, in 
symbols $R_1 \sim R_2$, if for every finite set $S \subset V(G)$ there exists a 
connected component of $G \setminus S$ containing a tail of both 
$R_1$ and $R_2$.
One can show that $\sim$ is an equivalence relation. The equivalence classes of 
$\sim$ are called \emph{ends} of $G$.

If $T = (V,E)$ is an infinite tree, then the set of vertices in $V$ such that 
$T-v_i = \langle V \setminus v_i \rangle$
contains at least two infinite components is denoted by 
$V^C = \{v_1, v_2, \dotsc\}$. We call the induced subgraph $\langle V^C \rangle$ 
the \emph{trunk} of $T$ and denote it  by $T^C$.

\subsection{Automorphisms and Colorings}

An \emph{isomorphism} $\varphi: V(G) \to V(H)$ between two graphs $G$ and $H$ is 
a bijection such that $uv \in E(G)$ if and only if 
$\varphi(u)\varphi(v) \in E(H)$.
The isomorphisms of $G$ onto itself are called \emph{automorphisms}. 
They form a group $A(G)$.
The \emph{stabilizer} of a vertex $v \in V(G)$ is the set 
$A(G)_v = \{\alpha \in A(G) \,|\, \alpha(v) = v\}$.

\begin{definition}
	A \emph{$d$-coloring} $\cf: V \to \{1, ..., d\}$ of a graph $G$ is 
	a map that gives each vertex of $G$ a color $i\in\{1,...,d\}$.
	We say that an automorphism \emph{preserves} a coloring $\cf$, 
	if $\cf(v) = \cf(\varphi(v))$ for all $v\in V$.
	Otherwise, we say that the coloring \emph{breaks} the automorphism.
\end{definition}

The set $A(G)_\cf$ of all automorphisms preserving $\cf$ forms a group.
If $v \in V(G)$ and $A(G)_\cf \subseteq A(G)_v$, 
we say that $\cf$ \emph{fixes} $v$.
As mentioned in the introduction our aim is to construct graph colorings that 
fix as many vertices as possile.
The minimal number of colors needed to fix all vertices is called the 
distinguishing number.

\begin{definition}
	The \emph{distinguishing number} $\D(G)$ of a graph $G$ is the the smallest 
	$d$ for which there exists a $d$-coloring $\cf$ of $G$ such that
	the only automorphism preserving $\cf$ is the identity.
\end{definition}

\begin{definition}
	The \emph{center} of a finite graph is a vertex for which the greatest 
	distance from $v$ to any other vertex of the graph is minimal.
\end{definition}

In a finite tree $T$ the center is either a single vertex or an edge.
If the center of $T$ is a vertex $w$, then $A(T)_w = A(T)$. If the center is an 
edge $uv$, any automorphism $\alpha \in A(T)$ satisfies either
$\alpha(v) = v$ and $\alpha(u) = u$ or $\alpha(v) = u$ and $\alpha(u) = v$.
We define a subtree $T_w$ of the tree $T$ rooted in $v$, with $w \in S(n,v)$, as 
the tree induced by $w$ and all the vertices in $S(m,v)$, $m > n$,
for which there exists a path to $w$ in $T$ not containing $v$.

%%%%%%%%%%%%%%%%%%%%%%%%%%%%%%%%%%%%
\section{Coloring locally finite infinite trees}\label{sec:more_colors}
%%%%%%%%%%%%%%%%%%%%%%%%%%%%%%%%%%%%

In this section we prove that every locally finite infinite tree with maximal 
valence $k$ has distinguishing number $k-1$.
For the proof we need the following two lemmas.

\begin{lemma}\label{le:fix_general}
	Let $T$ be a  tree with maximal valence $k$. If $T$ has a vertex $v$ of 
	valence $1 \leq val(v) \leq k-1$ then $\D\left(A(T)_v\right) \leq k-1$.
	In other words, there is a $(k-1)$-coloring $\cf$ of $T$ that breaks all 
	automorphisms of $A(T)_v$.
\end{lemma}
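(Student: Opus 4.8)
The plan is to root $T$ at $v$ and reduce the statement to distinguishing the rooted tree. Since every automorphism in $A(T)_v$ fixes $v$ and preserves distances, it preserves each sphere $S(n,v)$ and carries up-neighbors to up-neighbors. Thus it suffices to produce a $(k-1)$-coloring $\cf$ under which the only automorphism of $A(T)_v$ preserving $\cf$ is the identity.

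First I would record the combinatorial observation that makes $k-1$ colors sufficient: every vertex of $T$ has at most $k-1$ up-neighbors. Indeed, any vertex $u \neq v$ has a down-neighbor, so at most $k-1$ of its (at most $k$) neighbors are up-neighbors; and the root $v$ has no down-neighbor but, by hypothesis, valence $val(v) \leq k-1$, so it too has at most $k-1$ up-neighbors. This is exactly where the assumption $val(v) \leq k-1$ enters, and it is indispensable: if $v$ had valence $k$, its $k$ up-neighbors could not all receive distinct colors from a palette of size $k-1$.

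Next I would construct $\cf$. Processing the spheres $S(0,v), S(1,v), S(2,v), \dotsc$ outward, at each vertex $u$ I assign pairwise distinct colors from $\{1, \dotsc, k-1\}$ to the up-neighbors of $u$; this is possible by the bound above, and since the up-neighbor sets of distinct vertices are disjoint, there is never a conflict. The color of $v$ itself is irrelevant and may be chosen arbitrarily. In short, the construction is simply to colour so that siblings always receive distinct colours.

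Finally I would verify that $\cf$ breaks every $\alpha \in A(T)_v$ by induction on the distance from $v$. The base case is $\alpha(v) = v$. Assuming $\alpha$ fixes $S(n,v)$ pointwise, take $w \in S(n+1,v)$ with down-neighbor $u$; then $\alpha(u) = u$, so $\alpha(w)$ is again an up-neighbor of $u$, and since $\alpha$ preserves $\cf$ while the up-neighbors of $u$ carry pairwise distinct colors, $\alpha(w) = w$. Hence $\alpha$ fixes every vertex and equals the identity. I expect no serious obstacle: the only points needing care are confirming that the children bound genuinely relies on the hypothesis on $val(v)$, and observing that the induction reaches every vertex even when $T$ is infinite, since each vertex lies at finite distance from $v$.
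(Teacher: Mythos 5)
Your proposal is correct and follows essentially the same route as the paper's own proof: color sphere by sphere outward from $v$, giving the up-neighbors of each vertex pairwise distinct colors (possible since every vertex, including $v$ by the hypothesis $val(v) \leq k-1$, has at most $k-1$ up-neighbors), and conclude by induction on the distance from $v$ that any color-preserving automorphism fixing $v$ is the identity. Your writeup merely makes explicit the verification step that the paper leaves implicit.
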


\begin{proof}
	Let $v$ be a vertex of valence $1 \leq val(v) \leq k-1$. We color $v$ with 
	an arbitrary color and all its neighbors with different colors.
	This coloring fixes $B(1,v)$ in $A(T)_v$.
	We prove that for all $i>0$ there is a $k-1$-coloring of $B(i,v)$ that 
	breaks all automorphisms of $A(B(i,v))_v$. To show this, it suffices
	to extend a given coloring $\cf(i)$  of $B(i,v)$ with $k-1$ colors to 
	$B(i+1,v)$. We do this as follows. Every vertex in $S(i,v)$ has at most 
	$k-1$ up-neighbors.
	Each of them can be colored with a different color. If we continue to color 
	the tree in this way, we obtain a coloring that breaks
	all automorphisms of $A(T)_v$ but the identity.
\end{proof}

The second result that we need is K\"onigs Lemma, see for example \cite{di-06}.

\begin{lemma}[K\"onig's Lemma]
	Let $V_{0}, V_{1}, V_{2},\dots$ be an infinite sequence of non-empty, 
	disjoint sets. Let $V = \cup_{i\geq 0}$ denote their union.
	If $G = (V,E)$ is a graph such that for all $v \in V_{n}$, $n \geq 1$, there 
	exits a vertex $f(v) \in V_{n-1}$ adjacent to $v$,
	then there exists an infinite path $v_{0}v_{1}\dots$ in $G$ with 
	$v_{n} \in V_{n}$ for all $n \geq 0$.
\end{lemma}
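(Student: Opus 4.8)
The plan is to construct the ray $v_0 v_1 v_2 \dots$ one vertex at a time, maintaining at every stage a single ``active'' vertex that is guaranteed to lie below infinitely many other vertices, so that the construction can always be continued. Fix the parent map $f$ (choosing one down-neighbor for each $v \in V_n$, $n \ge 1$). For $v \in V_n$ and $w \in V_m$ with $m \ge n$, I call $w$ a \emph{descendant} of $v$ if the iterated parent sequence $w, f(w), f(f(w)), \dots$ reaches $v$ after $m-n$ steps. Since $f$ is single-valued, each $w$ has a unique ancestor in every lower level; this organises $V$ into the forest given by $f$, and the descendants of a vertex $v \in V_n$ split, level by level, among the finitely many \emph{children} of $v$, i.e. the vertices $u \in V_{n+1}$ with $f(u) = v$.

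First I would settle the base case. The union $V = \bigcup_{i \ge 0} V_i$ is infinite, since the $V_i$ are non-empty, disjoint, and infinite in number, whereas $V_0$ is finite; as every vertex has a unique ancestor in $V_0$, the pigeonhole principle yields some $v_0 \in V_0$ with infinitely many descendants. For the inductive step, suppose $v_0, \dots, v_n$ have been chosen with $v_i \in V_i$, with $f(v_{i+1}) = v_i$ (so that each $v_i v_{i+1} \in E$), and with $v_n$ having infinitely many descendants. Every descendant of $v_n$ other than $v_n$ itself is a descendant of one of its finitely many children, so some child $v_{n+1} \in V_{n+1}$ must again have infinitely many descendants. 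Choosing it preserves the invariant, and since $f(v_{n+1}) = v_n$ the edge $v_n v_{n+1}$ lies in $E$. Iterating indefinitely produces the required level-respecting path.

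The step that carries the whole argument is the pigeonhole choice, and this is exactly where I expect the only real obstacle to sit: the reasoning needs each level set $V_n$ to be finite, so that $v_n$ has only finitely many children and ``infinitely many descendants'' must concentrate on one of them. Without finiteness the greedy choice can stall — one can arrange non-empty infinite levels in which every vertex has only finitely many descendants and no infinite ray exists — so the finiteness of the $V_n$ is not a convenience but the crux. In the intended application this is automatic, as the levels are the spheres $S(n,v)$ of a locally finite tree and hence finite. The invariant that the active vertex has infinitely many descendants is precisely what guarantees the construction never terminates, turning the presence of arbitrarily long finite paths into one genuinely infinite ray.
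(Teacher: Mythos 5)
Your proof is correct, but there is no in-paper argument to compare it against: the paper states this lemma without proof and refers to Diestel \cite{di-06}. Your greedy construction, maintaining the invariant that the active vertex has infinitely many descendants under the parent map $f$, is essentially the standard textbook proof (Diestel's version refines an infinite family of descending paths $v, f(v), f(f(v)),\dots$ level by level, which is the same argument in different clothing), so there is nothing to object to in the structure of the argument. More importantly, you have correctly spotted that the statement as printed in the paper omits the hypothesis that the sets $V_n$ are \emph{finite}, which both of your pigeonhole steps (the choice of $v_0$ and of each child $v_{n+1}$) genuinely need. Without that hypothesis the lemma is false: take $V_n = \{(k,n) : k > n\}$ with $(k,n)$ adjacent to $(k,n-1)$ for $n \geq 1$; every level is non-empty, every vertex of $V_n$ has a neighbour in $V_{n-1}$, yet the graph is a disjoint union of finite paths and contains no ray. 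Diestel's statement does include finiteness, and in the paper's application the $V_n$ are spheres of a locally finite tree and hence finite, so the omission is harmless there --- but your identification of finiteness as the crux, rather than a convenience, is exactly right, and your proof is complete once that hypothesis is restored to the statement.
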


Note that parts of the next result (namely the finite case) can be extracted 
from a recent classification result from \cite{imhu-2017}.
Here we provide a direct and constructive proof which also generalizes to trees 
with maximal valence $k$.

\begin{theorem}\label{thm:inftree}
	Every locally finite, infinite tree with maximal valence $k$ has 
	distinguishing number $D(G) \leq k-1$.
\end{theorem}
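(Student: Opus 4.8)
The plan is to build the coloring outward from a single root and reduce everything to Lemma~\ref{le:fix_general}; throughout I assume $k\ge 3$, so that the $k-1\ge 2$ available colors actually permit the constructions below. First I would root $T$ at a vertex $v_0$ and color by the rule of Lemma~\ref{le:fix_general}: give the children of every vertex pairwise distinct colors from $\{1,\dots,k-1\}$. Once $T$ is rooted, every vertex other than $v_0$ has at most $k-1$ children, so this rule can be followed everywhere except possibly at $v_0$ itself, which has $k$ children exactly when $T$ is $k$-regular. At that single vertex I would instead distribute the $k-1$ colors over its $k$ children so that the (unique) pair sharing a color roots two \emph{non-isomorphically colored} infinite subtrees; this is possible precisely because the subtrees are infinite, the finite star $K_{1,k}$ (with $\D(K_{1,k})=k$) showing that infiniteness is essential here. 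By the inductive argument of Lemma~\ref{le:fix_general}, any color-preserving automorphism that fixes $v_0$ must then fix every child of $v_0$, and, descending level by level, must be the identity. Call this property \emph{downward rigidity}.

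The real work is to \textbf{pin the root}, i.e.\ to guarantee that no color-preserving automorphism moves $v_0$; once this holds, downward rigidity finishes the proof since the color-preserving group lies in $A(T)_{v_0}$. Here I would split according to the ends of $T$. If $T$ has at least two ends and its trunk $T^C$ is finite, then $T^C$ is invariant under every automorphism (deleting a vertex that leaves two infinite components is an automorphism-invariant property), so the center of $T^C$ is fixed; rooting at this center anchors $v_0$ for free, and an edge-center is handled by coloring the two sides differently to break the flip. When instead $T$ is one-ended or has an infinite trunk, no finite invariant is available, and this is where König's Lemma enters: it supplies a ray $v_0v_1v_2\dots$ from the root along which I would impose a color pattern invariant under no nontrivial shift. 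A one-ended tree contains no double ray and hence no translation, so only elliptic automorphisms survive there, and the shift-free ray together with downward rigidity forces any survivor to fix $v_0$.

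The main obstacle is exactly this anchoring in the one-ended and infinite-trunk cases, and the difficulty is twofold. One must rule out \emph{translations}, which would force the coloring to be periodic along an axis and which exist only when $T$ contains a double ray; König's Lemma is the tool that makes this work, as it guarantees the ray on which the shift-free pattern lives. Simultaneously one must prevent \emph{elliptic} automorphisms from carrying $v_0$ onto another vertex of equal valence, which is where the infiniteness of the hanging subtrees—already exploited at a full-valence root—lets the construction succeed with only $k-1$ colors rather than $k$. Combining the two halves, every color-preserving automorphism fixes $v_0$ and is therefore the identity, giving $\D(T)\le k-1$.
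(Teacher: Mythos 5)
Your proposal has a genuine gap, and it surfaces exactly in the hardest case: the $k$-regular tree. Under your global rule that the children of every vertex receive pairwise distinct colors from $\{1,\dots,k-1\}$, every non-root vertex of the $k$-regular tree has exactly $k-1$ children, which are therefore forced to carry \emph{all} $k-1$ colors, one each. An easy induction then shows that any two vertices of the same color root isomorphic \emph{colored} rooted subtrees (match the color-$i$ child to the color-$i$ child, level by level). Consequently the two children of $v_0$ that share a color always root isomorphically colored infinite subtrees; the ``non-isomorphically colored'' choice you postulate at the root does not exist, and the resulting coloring admits a color-preserving involution swapping those two subtrees. So your construction provably fails precisely where it needs to work, even though the conclusion is true there (the $k$-regular tree is in fact $2$-distinguishable by Babai). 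The paper avoids this trap by never applying the sibling-distinct rule to leafless parts: trees without leaves, and in the multi-ended case the trunk $T^C$ (which is automorphism-invariant), are colored with the $2$-distinguishing colorings of Watkins and Zhou \cite{wazh-2007}, and Lemma~\ref{le:fix_general} is used only on the finite trees hanging off them.

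The second gap is that your root-pinning step is asserted rather than proved. A ``color pattern invariant under no nontrivial shift'' only rules out translations, which, as you yourself note, do not exist in a one-ended tree; it does nothing against an elliptic automorphism carrying $v_0$ to another vertex, and your final sentence about ``the infiniteness of the hanging subtrees'' is not an argument. The paper's actual mechanism in the one-ended case is concrete: root at a \emph{leaf} $v_0$, color the ray $R$ from $v_0$ entirely with color $1$, and color all neighbors of $R$ off $R$ with colors different from $1$ (possible since such a vertex has at most $k-2$ neighbors off $R$). Then for any other leaf $w$, the unique ray from $w$ in the single end must pass through a non-$1$-colored vertex before merging with $R$, so no color-preserving automorphism can send $v_0$ to $w$; since leaves map to leaves, $v_0$ is fixed, then $R$ is fixed pointwise, and Lemma~\ref{le:fix_general} finishes off the finite trees attached to $R$. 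Finally, a smaller slip: your case ``at least two ends and finite trunk'' is vacuous, since a tree with two ends contains a double ray every vertex of which lies in the trunk, so the trunk of a multi-ended tree is always infinite; the work you deferred to that case (anchoring via the center of $T^C$) is never available.
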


\begin{proof}
	Let $T$ be a locally finite infinite tree. It is well known that such a tree 
	without vertices of valence~$1$ is $2$-distinguishable,
	see for example \cite{wazh-2007}.
	Hence, we can assume that $T$ has at least one vertex of valence~$1$.

	By K\"{o}nig's Lemma, $T$ has at least one end. We first treat the case, 
	where $T$ has exactly one end, say $e$.
	Let $R \in e$ be a ray with origin $v_0$, where $v_0$ is a vertex of 
	valence~$1$.

	We color all vertices of $R$ with color~$1$. Thus every automorphism that 
	stabilizes $R$ also fixes all vertices of $R$.
	Consider a vertex $z$ in $R$ and its $j$ neighbors that are not in $R$. 
	Since $j\leq k-2$ we can color these vertices with different colors
	without using color~$1$. We proceed by doing this for all neighbored 
	vertices of $R$. Let $v$ be one neighbor of $z$ not in $R$ and let $T_v$ be 
	the component of $T-z$ that contains $v$. Now, color $T_v$ as in
	Lemma~\ref{le:fix_general}. Continue by coloring the neighbors of all 
	vertices in $R$ in the same way.
	If $w$ is another vertex of $T$ of valence~$1$, then the unique ray $R_w$ in 
	$e$ with origin $w$ contains at least one vertex which is not colored 
	with~$1$. Hence, independently of how the coloring of $T$ will be finished, 
	any color preserving automorphism $\alpha$ of $T$ will satisfy 
	$\alpha(v_0) \neq w$. Because this holds for all vertices of valence~$1$ 
	that are different from $v_0$ we infer that $\alpha(v_0) = v_0$ and that 
	$\alpha$ fixes every vertex of $R$. Because the vertices in $R$ are fixed 
	also the neighbors and finally all finite trees connected to $R$ are fixed.

	We now treat the case when $T$ has two or more ends. Consider the trunk 
	$T^C$ of $T$ which is a locally finite, infinite tree without leaves.
	Since it is unique any automorphism of $T$ leaves $T^C$ invariant. 
	By~\cite{wazh-2007} we know that $T^C$ is $2$-distinguishable.
	Any vertex $z$ of $T^C$ has at most $k - 2$ neighbors not in $T^C$ that can 
	be fixed by coloring them with different colors.
	Again, for each of these neighbors $v_i$ of $z$ we consider the underlying 
	tree $T_{v_i}$ of $T-z$ that contains $v_i$ as before
	and apply Lemma \ref{le:fix_general}.	
\end{proof}

The following example shows that Theorem~\ref{thm:inftree} cannot be generalized 
to finite trees.

\begin{example}\label{exa:finitesubcubic}
	Consider a finite tree with center $v$ where every vertex is of valence~$3$ 
	or~$1$ and every leaf has the same distance from $v$.
	The distinguishing number of such trees is~$3$. For any $2$-coloring of such 
	a tree, there remains a pair of leaves that is indistinguishable,
	see Figure~\ref{fig:coloring_tuck_ex1}. An other easy example is the 
	$K_{1,3}$.
	
	\begin{figure}[h]
		\centering
		\includegraphics[scale=0.9]{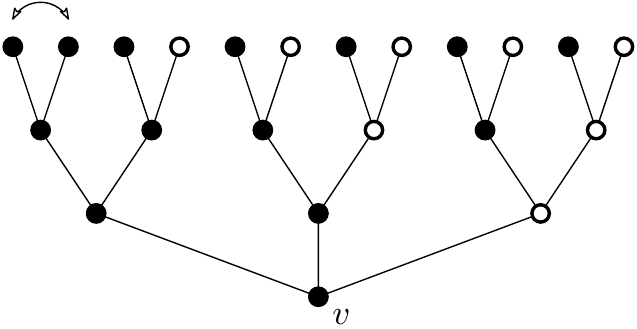}
		\caption{Example of a finite tree $T$ with $\D(T) = 3$. 
		With a $2$-coloring, two leaves are interchangeable.}
		\label{fig:coloring_tuck_ex1}
	\end{figure}
\end{example}

%%%%%%%%%%%%%%%%%%%%%%%%%%%%%%%%%%%%
\section{Finite trees} \label{sec:finite_trees}
%%%%%%%%%%%%%%%%%%%%%%%%%%%%%%%%%%%%

We begin with the analog of Theorem~\ref{thm:inftree} for finite trees.

\begin{lemma}\label{co:k-1_coloring}
	Let $T$ be a finite tree with maximal valence $k$. Then there is a 
	$k-1$-coloring, which fixes all vertices, with the possible exception 
	of two sibling leaves.
\end{lemma}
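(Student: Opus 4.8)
The plan is to root $T$ at its center, which --- being either a single vertex or a single edge --- is preserved by every automorphism of $T$; this is exactly what lets us invoke Lemma~\ref{le:fix_general}, since that lemma only controls automorphisms that already fix the chosen root. I may assume $k \geq 3$: for $k \leq 2$ the tree is a path, a degenerate case handled separately (with a single colour one cannot even break the reflection of a long even path, so the real content lies at $k \geq 3$, where also $k-1 \geq 2$ colours are available below).

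First I would dispose of the two easy cases. If the center is an edge $uv$, I delete it to split $T$ into the rooted subtrees $T_u$ and $T_v$; in each half the root has valence at most $k-1$, so Lemma~\ref{le:fix_general} supplies a colouring with no nontrivial colour-preserving root-fixing automorphism, and I colour $u$ and $v$ with two different colours. Any automorphism either fixes both $u$ and $v$ --- and is then the identity on each rigidly coloured half --- or swaps them, which the distinct colours forbid; so all vertices are fixed, with no exceptional pair. If the center is a vertex $w$ of valence at most $k-1$, Lemma~\ref{le:fix_general} applies directly and again fixes everything.

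The real case is a center vertex $w$ of valence exactly $k$, where the $k$ child-branches $T_1,\dots,T_k$ compete for only $k-1$ colours. The reduction I would use is that, after colouring, the surviving automorphisms are exactly those permuting branches that are isomorphic as coloured rooted trees, together with the internal symmetries of the individual branches; hence it suffices to colour the branches so as to make them pairwise distinguishable and internally rigid. Each branch is rooted at a vertex of valence at most $k-1$, so by Lemma~\ref{le:fix_general} it admits a rigid colouring with any prescribed root colour, and since the root colour is an isomorphism invariant this already yields at least $k-1$ pairwise non-isomorphic rigid colourings of each branch. Grouping the branches by uncoloured isomorphism type, any type occurring with multiplicity $m \leq k-1$ can therefore be realised by $m$ distinct rigid colourings, making the whole tree rigid with no exception. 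The only obstruction is when all $k$ branches share a single isomorphism type $S$, where $k-1$ rigid colourings no longer suffice.

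This tight case is the heart of the matter and the main obstacle. Here I would argue a dichotomy. If $S$ is a single leaf (so $T = K_{1,k}$), the $k$ children are leaves, pigeonhole forces two of them to share a colour, and these are exactly a pair of sibling leaves --- the unavoidable exception. If $S$ is larger but contains two sibling leaves, I keep $k-1$ branches rigid (one per root colour) and obtain the needed $k$-th type by recolouring one leaf of the remaining branch to match its sibling: this produces a coloured type distinct from every rigid one (it alone carries a monochromatic sibling pair) whose only internal symmetry is that single transposition of sibling leaves. The remaining subcase --- $S$ non-trivial with no two sibling leaves --- I would settle by a separate counting claim, proved by induction on the height of $S$, that such an $S$ already admits at least $k$ pairwise non-isomorphic rigid colourings (the root colour contributes a factor $k-1$, and the absence of sibling leaves forces enough extra structure to supply the missing factor), so that no exception is needed at all. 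Verifying this counting claim, and checking in the sibling-leaf subcase that the recolouring creates exactly one transposition and introduces no further symmetry, is where the bulk of the careful work lies.
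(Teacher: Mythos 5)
Your proof is correct, but it takes a genuinely different and substantially heavier route than the paper's. The paper's proof is a one-trick argument: root $T$ at its center vertex $v$, color the neighbors of $v$ with as many different colors as possible (so at most one color is repeated, and then exactly twice, say on $v_m$ and $v_n$), apply Lemma~\ref{le:fix_general} inside every branch, and observe that branches whose roots carry distinct colors can never be interchanged; if the two same-colored branches $T_{v_m}$ and $T_{v_n}$ happen to be interchangeable, recolor a single leaf $a$ of $T_{v_m}$. This recoloring breaks the interchange automatically --- a color-preserving isomorphism preserves the number of vertices of each color, and the recoloring changes that count --- so no classification of branches by isomorphism type is needed at all; the only symmetry the recoloring can create is the transposition of $a$ with a same-colored sibling leaf $b$, which is exactly the exceptional pair allowed by the statement. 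You instead group branches by isomorphism type, reduce to the case where all $k$ branches are copies of one type $S$, and then run your trichotomy (a)/(b)/(c). What your route buys is information the paper leaves implicit: it pinpoints exactly when the exceptional pair is unavoidable and shows that in all other configurations the $(k-1)$-coloring can be made fully distinguishing. What it costs is case (c), whose counting claim is where you concede the bulk of the work lies --- and that work is avoidable, because the paper's trick settles precisely this case for free: in a branch with no two sibling leaves, the recolored leaf can only collide in color with siblings that are not leaves, and a leaf can never be mapped by an automorphism to a non-leaf, so the sphere-by-sphere induction underlying Lemma~\ref{le:fix_general} still fixes every vertex, with no exception and no counting.

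For the record, your counting claim is true and your sketched induction on height does go through: at each level, every isomorphism type of child-branch admits strictly more pairwise non-isomorphic rigid colorings than its multiplicity (at least $k$ by induction for nontrivial types, at least $k-1 \geq 2$ for a single-leaf type, which by hypothesis has multiplicity one), so the multiset of colored types can be varied below each of the $k-1$ root colors, giving at least $2(k-1) \geq k$ colorings of $S$. So there is no gap in your argument, only machinery the paper does not need. One genuine point in your favor: you are right that $k \leq 2$ must be excluded --- for a path with a center edge, a single color cannot break the reflection, which is not a sibling-leaf swap --- a degeneracy the paper silently ignores, since its proof colors the two endpoints of a center edge with ``different colors,'' which already requires $k-1 \geq 2$.
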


\begin{proof}
	If the center of $T$ consists of a single vertex $v$, assign $v$ an 
	arbitrary color and color its neighbors $v_1, v_2,\dotsc $ with as many
	different colors as possible. Only in the case where $v$ has valence $k$ we 
	have to use one color twice. As before let $T_{v_i}$ be the
	component of $T-v$ containing $v_i$. We extend the coloring of $T_{v_i}$ as 
	in Lemma \ref{le:fix_general} for all $i$. In the case where $v$
	has $k$ neighbors it is possible that there is an automorphism that 
	interchanges the two subtrees, say $T_{v_m}$ and $T_{v_n}$
	(with $v_m$ and $v_n$ of the same color). We take an endpoint $a$ of 
	$T_{v_m}$ and change its color so that $T_{v_m}$ and $T_{v_n}$ are
	indistinguishable.

	However, now there might be a color preserving automorphism that moves $a$. 
	This is only possible if there is another vertex of valence~$1$,
	say $b$, that has a common neighbor with $a$. This is the only pair of that 
	kind in $T$.

	We still have to consider the case where the center of $T$ is an edge, 
	say $uv$. In that case we color $u$ and $v$ with different colors,
	and consider the components $T_u$ and $T_v$ of $T-uv$ that contain $u$ or 
	$v$, respectively. We now obtain a distinguishing $k-1$-coloring
	by applying Lemma \ref{le:fix_general} to $T_u$ and $T_v$.
\end{proof}

It is known from Babai \cite{ba-77} that each (infinite) homogeneous tree of 
valence $k > 2$ is $2$-distinguishable. This result cannot be adapted
to the finite case, but note that we can fix all vertices except the leaves by a 
$2$-coloring.

\begin{lemma} \label{lem:regular_tree}
	Let $T$ be a finite tree where every vertex has valence $1$ or $k$. 
	Then there exist a 2-coloring of $T$ that fixes all vertices
	except (some of) the leaves.
\end{lemma}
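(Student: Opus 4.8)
The plan is to root $T$ at its center and to build the coloring from the leaves upward, reducing everything to the single requirement that every internal vertex (i.e.\ every vertex of valence $k$) be fixed, since leaves are allowed to move. First I would record the following criterion: if a color-preserving automorphism $\alpha$ fixes an internal vertex $w$, then $\alpha$ permutes the subtrees $T_{w'}$ rooted at the children $w'$ of $w$, necessarily sending leaf-children to leaf-children and internal-rooted subtrees to internal-rooted subtrees; hence $\alpha$ fixes $w$ together with every internal vertex below it as soon as the internal-rooted child subtrees of $w$ are pairwise non-isomorphic as colored rooted trees. So it suffices to produce a coloring in which, at every internal vertex, the internal-rooted child subtrees carry pairwise distinct colorings. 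The center is handled separately: a vertex center is fixed automatically, while for an edge center $uv$ I would simply give $u$ and $v$ different colors, which prevents the only automorphism that could cross the center.

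To guarantee that enough distinct colorings are always available, I would, for each subtree $S$ rooted at an internal vertex (whose root then has $k-1$ children), count the number $N(S)$ of \emph{rigid} $2$-colorings of $S$ up to color-preserving root-isomorphism, where rigid means that every internal vertex of $S$ is fixed. The key step is the bound $N(S)\ge 2k$, proved by strong induction on the height of $S$. If all $k-1$ children of the root are leaves, then $N(S)=2k$, the factor $2$ coming from the root color and the factor $k$ from the possible numbers $0,1,\dots,k-1$ of black leaves. Otherwise, writing the children as $\ell$ leaves together with internal subtrees of shapes $S_1,\dots,S_t$ occurring with multiplicities $\mu_1,\dots,\mu_t$, and noting that in a rigid coloring the $\mu_i$ copies of $S_i$ must receive pairwise distinct colorings, one obtains
\[
N(S)=2\,(\ell+1)\prod_{i=1}^{t}\binom{N(S_i)}{\mu_i}.
\]
Since $\mu_i\le k-1\le N(S_i)-1$, the induction hypothesis $N(S_i)\ge 2k$ yields $\binom{N(S_i)}{\mu_i}\ge N(S_i)\ge 2k$ whenever $\mu_i\ge 1$, so $N(S)\ge 2k$ in all cases.

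With this bound the construction is routine: processing the vertices from the deepest level toward the center, at each internal vertex I assign pairwise distinct rigid colorings to its internal-rooted children that share a common shape. This is possible because there are at most $k$ such children, while at least $2k$ distinct colorings are available for each shape; children of different shapes are automatically non-isomorphic. Finishing the center as above, a downward induction from the center shows that every color-preserving automorphism fixes all internal vertices, while leaves remain free to permute within their sibling sets. This is exactly the claimed coloring.

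I expect the main obstacle to be the bound $N(S)\ge 2k$. A priori the count might collapse at an internal vertex all of whose $k-1$ children are copies of one shape $S'$, since there $N(S)=2\binom{N(S')}{k-1}$, and one must verify this never falls short of what the next level up requires. The induction settles this because $N$ already equals $2k$ at height $1$ and the binomial factors only enlarge it; the genuinely delicate part is the bookkeeping behind the displayed recursion, namely checking that distinct choices of root color, black-leaf count, and child-coloring subsets really do produce non-isomorphic colored trees and that every rigid coloring arises in this way.
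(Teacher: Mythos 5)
Your proof is correct, but it takes a genuinely different route from the paper's. The paper colors top-down: the center is colored white, all of its neighbors black, and each internal vertex then distinguishes its same-colored children by a purely \emph{local} signature, namely the number of black vertices among a child's own $k-1$ up-neighbors; since that count can be any element of $\{0,1,\dots,k-1\}$, there are $k$ available signatures against at most $k$ siblings to separate, and the process simply repeats level by level until the leaves are reached. You instead work bottom-up with a \emph{global} invariant: the isomorphism class of the entire colored child subtree, supported by the enumeration $N(S)=2(\ell+1)\prod_{i}\binom{N(S_i)}{\mu_i}$ and the bound $N(S)\ge 2k$. Both arguments ultimately reduce to ``enough colorings versus at most $k$ siblings,'' but the paper's local signature needs no recursion, no isomorphism-class bookkeeping and no binomial estimates -- one only observes that a color-preserving automorphism fixing a vertex must preserve the black-child counts of its children -- whereas your argument carries the burden you yourself flag, namely verifying that the recursion for $N(S)$ is exact (every rigid coloring arises, and distinct data give non-isomorphic colored trees). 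In exchange, your count is quantitative: it shows every internal subtree shape admits at least $2k$ rigid colorings up to isomorphism, which is strictly more information than the lemma needs and would be reusable, for instance to lower-bound the number of such colorings or to adapt the argument to trees whose internal valences vary. The paper's proof avoids all of this by never comparing whole subtrees, only next-level color counts.
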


\begin{proof}
	Suppose the center of $T$ consists of the single vertex $v$. 
	This vertex is then automatically fixed by any automorphism.
	Let $v$ be a white vertex and color all $k$ neighbors of $v$ black.
	
	\begin{figure}[h]
		\centering
		\includegraphics[scale=0.8]{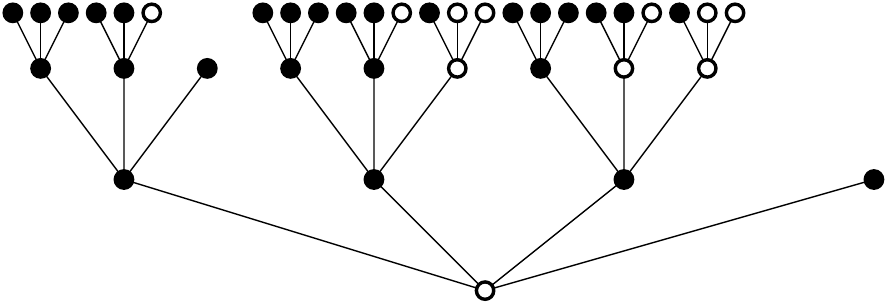}
		\caption{Example of the coloring algorithm given in the proof of 
		Lemma~\ref{lem:regular_tree} with $k = 4$.}
		\label{fig:coloring_ex_lem1}
	\end{figure}

	To avoid that they can be changed we assign different colorings to the next 
	$k-1$ up-neighbors. Since the number of different $2$-colorings
	of $k-1$ indistinguishable vertices is $k$, each of the black vertices can 
	be fixed. We proceed with this coloring process until we reach the leaves.
	Then, all vertices except the leaves are fixed. 
	See Figure~\ref{fig:coloring_ex_lem1} for an example.
	
	If the center of $T$ is an edge, say $uv$, we color $u$ white, $v$ black and 
	continue with a coloring for the subtrees $T_u$ 
	respectively $T_v$ as before.
\end{proof}

%%%%%%%%%%%%%%%%%%%%%%%%%%%%%%%%%%%%
\section{Main Theorem}\label{sec: main theorem}
%%%%%%%%%%%%%%%%%%%%%%%%%%%%%%%%%%%%

Given a tree $T$ with maximal valence $k$, we now construct a $c$-coloring of 
$T$ which fixes all vertices of $T$ that are sufficiently far away from the next 
leaf, respectively all vertices if there are no leaves.

\begin{maintheorem} \label{new_mainthm}
	Let $T$ be a finite or infinite tree of maximal valence $k < \infty$. 
	Suppose we are given $c$ colors, $2 \leq c \leq k$, to color the vertices of 
	$T$ and that $r$ is an integer that satisfies
	\begin{align*} 
		3 < k \leq 2^{r-1} \mbox{ for } c = 2, \mbox{  or}\\
		3 < k \leq c^r(c-1) + 2 \mbox{ for } c > 2.
	\end{align*} 
	Then there exists a $c$-coloring of $T$ which fixes all vertices of $T$ 
	whose distance from the next leaf is at least $r$.

	Furthermore, $r = 0$ for $c = k$ and $r = 1$ for $c = k - 1$.
\end{maintheorem}

The proof of the main theorem is based on the following proposition, which 
actually contains more information than the main theorem.
In the proposition we let $T$ be a tree rooted in a vertex $v$. If $T$ is 
finite, then  we define $T_u$ as the vertex $u$ together with the components of 
$T-u$ that do not contain $v$. In the infinite case $T_u$ consists of the vertex 
$u$ together with all finite subtrees of $T-u$, 
see Figure~\ref{fig:tu_inifinite}.

\begin{figure}[h]
	\centering
	\includegraphics[scale=0.8]{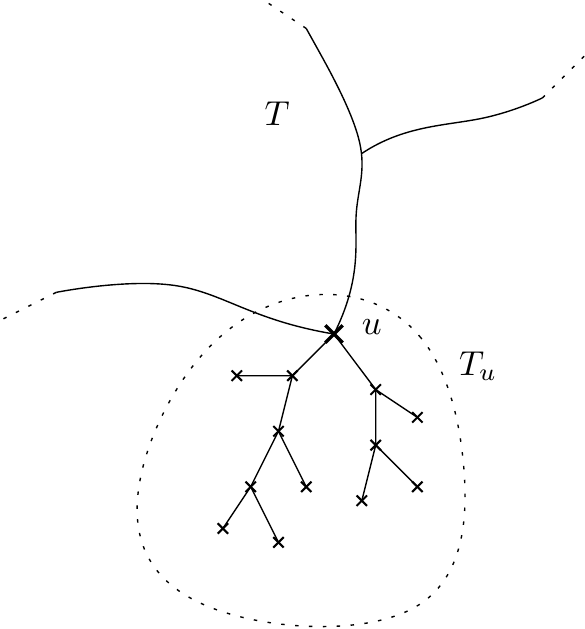}
	\caption{After removal of $u$, all remaining finite trees are attached 
	to $u$ in $T_u$.} \label{fig:tu_inifinite}
\end{figure}

\begin{proposition} \label{thm:fix_tree}
	Let $T$ be a finite or infinite tree of maximal valence $k < \infty$. We 
	assume $T$ to be rooted in a fixed vertex  $v$ (the center of the tree in 
	the finite case) and choose a number $c\geq 2$. Then, for every pair $c,k$, 
	there exists a number $r(c,k)$ and a $c$-coloring of $T$ that fixes all 
	vertices $u \in V(T)$, for which there exists a leaf $w$ in $T_u$ such that
	$d(u,w) \geq r(c,k)$.
	If $k\in \{0,1,2\}$ or $c \geq k$, then $r(c,k) = 0$, which means that the 
	entire graph is fixed. If $k \geq 3$ and $c = k - 1$, then $r(c,k) = 1$. 
	Otherwise
	\begin{align}
		r(c,k) :=
		\begin{cases}
			\log_{2}(\max \left\{3, k-2\right\}) + 1 
				& \text{for } k \geq 4 \text{ and } c = 2.\\
			\log_{c}\left(\max
			\left\{3,\left\lceil \frac{k-2}{c-1}\right\rceil \right\} \right)
				& \text{for } k \geq 4 \text{ and } 2 < c \leq k-2.
		\end{cases}\label{ali:definition_r(k)}
	\end{align}
\end{proposition}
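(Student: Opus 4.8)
The plan is to prove this by a rooting-and-recursion strategy, colouring the tree level by level outward from the root $v$ and controlling, at each vertex $u$, the number of pairwise indistinguishable subtrees $T_{u_i}$ hanging off $u$. The key observation is that a colour-preserving automorphism fixing $u$ can only permute those up-neighbours $u_i$ of $u$ whose subtrees $T_{u_i}$ are \emph{isomorphic as coloured trees}; if I can guarantee that all such subtrees receive distinct colourings, then $u$ and its entire subtree structure are rigid. So the quantity to track is: given $c$ colours, how deep must a subtree be before I can produce enough mutually distinguishable coloured copies to handle the (at most $k-1$, or $k-2$ below the root) sibling subtrees that might otherwise be swapped. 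This is exactly where the bound $r(c,k)$ enters, and the recursive inequalities $k \le 2^{r-1}$ and $k \le c^r(c-1)+2$ in the Main Theorem are the closed-form solutions of this counting.

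\textbf{First I would} dispose of the easy regimes. For $k \in \{0,1,2\}$ the tree is a point, an edge, or a path/ray, all of which are rigid or handled by Lemma~\ref{le:fix_general}, giving $r=0$. For $c \ge k$ I can assign every vertex's up-neighbours distinct colours outright (there are at most $k-1 \le c-1$ of them away from the root, at most $k \le c$ at the root), so by the argument of Lemma~\ref{le:fix_general} nothing can be permuted and $r=0$. The case $c = k-1$ is the borderline one handled essentially by Lemma~\ref{co:k-1_coloring}: with $k-1$ colours one can fix everything except possibly a single pair of sibling leaves, and a leaf is exactly a vertex at distance $0$ from itself, so every vertex at distance $\ge 1$ from the next leaf is fixed, giving $r=1$.

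\textbf{The main work} is the recursive construction for $2 \le c \le k-2$. Here I would set up an induction on the height (distance from $u$ to the farthest leaf in $T_u$), showing that if $T_u$ has height at least $r(c,k)$ then I can colour $T_u$ so that its root $u$ is fixed by every colour-preserving automorphism of $T_u$, \emph{and} moreover I have a supply of at least $k-1$ distinct such colourings of subtrees of that height. The counting is: at depth-$1$ below a vertex I can realise $c$ colours directly on the up-neighbour; at each further level the number of distinguishable coloured subtrees I can build multiplies by roughly $c$ (each already-distinguished child can be independently recoloured, with one colour reserved so that the root colouring does not collide), which after $r$ levels yields on the order of $c^r(c-1)$ distinct subtree-colourings — enough to separate all $k-2$ (or $k-1$) siblings precisely when $k-2 \le c^r(c-1)$, i.e. when $r \ge \log_c\big(\lceil (k-2)/(c-1)\rceil\big)$, matching \eqref{ali:definition_r(k)}. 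For $c=2$ the same count gives $k-2 \le 2^{r-1}$ after accounting for the one reserved colour, explaining the slightly different formula. I would feed this level-wise colouring into König's Lemma for the infinite case exactly as in Theorem~\ref{thm:inftree}: any vertex $u$ with a sufficiently distant leaf below it sits in a finite subtree $T_u$ that gets the rigid finite colouring, while infinite branches carry a ray-based colouring that fixes them.

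\textbf{The hard part} will be the bookkeeping at the root $v$ and the $\max\{3,\cdot\}$ truncation in the formula. The root may have up to $k$ neighbours rather than $k-1$, so one extra colour's worth of freedom is lost there, and — as in the proof of Lemma~\ref{co:k-1_coloring} — if $v$ has full valence $k$ there may be an unavoidable swap of two equal-coloured subtrees that forces a single sibling-leaf pair to remain interchangeable; I must verify this residual pair does not violate the "distance $\ge r$ from the next leaf" guarantee, which it does not since the offending vertices are themselves leaves. The $\max\{3,\cdot\}$ reflects that for small $k-2$ the binomial-type count of available colourings is not quite $c^r(c-1)$ but needs a floor of $3$ to have enough room to break a length-three symmetry; making the induction base case robust enough to cover this small-$k$ corner, and checking that the ceiling and logarithm round in the intended direction, is the most delicate verification. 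I expect no conceptual obstacle beyond ensuring the recursion's reserved-colour accounting is tight enough to meet the stated $r(c,k)$ rather than something a constant larger.
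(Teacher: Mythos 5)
Your high-level plan --- root the tree, work outward sphere by sphere, and ensure that sibling subtrees which could otherwise be swapped receive pairwise non-isomorphic colourings --- matches the necessary condition the paper enforces, and your easy cases ($k\le 2$, $c\ge k$, $c=k-1$) are disposed of exactly as in the paper. But your mechanism for the main case $2\le c\le k-2$ is genuinely different from the paper's, and as written it has a real gap. The paper never counts colourings of whole subtrees: it colours sibling roots optimally, distinguishes same-coloured siblings by ``main lines'' (longest paths to leaves) carrying pairwise different reverse base-$c$ sequences, and then spends the bulk of the proof (Step~4 of the algorithm, Lemma~\ref{lem:combi}, Lemma~\ref{lem:max}, Cases~1, 2, 2.1, 2.2) ensuring that a main line cannot be mapped onto any \emph{other} path in a sibling subtree. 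That protection step is where the restriction ``every colour used at least twice'' comes from, whose odd-cardinality failure is the true source of the $\max\{3,\cdot\}$ (Lemma~\ref{lem:combi}), and where the $+1$ for $c=2$ comes from (it forces the last main-line vertex to be white, Case~2.2). Your explanations of both constants (``floor of 3 to break a length-three symmetry'', ``one reserved colour'') do not correspond to any mechanism in your own construction, and the ``unavoidable swap at a full-valence root'' you worry about is a $c=k-1$ phenomenon (Lemma~\ref{co:k-1_coloring}) that does not occur in the regime $c\le k-2$.

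The genuine gap is the engine of your induction: the claim that the number of mutually distinguishable coloured subtrees ``multiplies by roughly $c$'' per level, giving a supply of $c^r(c-1)\ge k-2$ of them, is asserted but never proved, and it is false without an additional argument. If a vertex has $g$ isomorphic children whose subtrees have height $\ge r$, and the number of admissible colouring classes for that child shape is $N$, then those children must receive pairwise distinct classes, and the number of ways to do so up to isomorphism is $\binom{N}{g}$; when $g=N$ (for instance $N=c^{r+1}$ path-shaped children with $g=k-1=c^{r+1}$) this equals $1$, the supply for the parent shape collapses to $c$ (the root colour alone), and one level higher a vertex with more than $c$ children of that parent shape --- all satisfying the distance condition --- cannot have them fixed, so your induction hypothesis of ``at least $k-1$ distinct colourings'' breaks. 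To close the induction you must establish a strict margin $g\le N-1$ at every level, essentially $k\le c^{r+1}$ with slack, and show it propagates via $N'\ge c\binom{N}{g}\ge cN$ whenever $1\le g\le N-1$. This margin does in fact follow from the stated value of $r(c,k)$ (one has $c^{\lceil r\rceil+1}\ge \frac{c(k-2)}{c-1}>k-1$ because $c\le k-2$), so your route can be completed, and arguably more cleanly than the paper's; but this verification is exactly the step you defer with ``I expect no conceptual obstacle'', it is not what your tracked inequality $k-2\le c^r(c-1)$ says, and without it the proposal is a programme rather than a proof.
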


For the values of $\lceil r(c,k) \rceil$ compare Table~\ref{table2}, which is 
based on  Lemma~\ref{co:k-1_coloring}, Proposition~\ref{thm:fix_tree}, and 
summarized in the Main Theorem. In particular, it shows that,  whenever the 
entry in the table is 1, we find a $c$-coloring that fixes all vertices of the 
tree with the possible exception of leaves.

\begin{table}[h]
	\centering
	\begin{tabular}{|l|ccccccccccccccc|}
		\hline
   		\diagbox{\(c\)}{\(k\)}
     	& 2		& 3		& 4		& 5		& 6		& 7		& 8		& 9
     	& 10	& 11	& 12	& 13	& 14	&  15	& 16 \\
  		\hline
  		2
  		& 0		& 1 	& 3 	& 3 	& 3 	& 4 	& 4		& 4
  		& 4		& 5		& 5		& 5		& 5		& 5		& 5 \\
	 	3
	 	& - 	& 0		& 1		& 1		& 1 	& 1 	& 1		& 2
	 	& 2 	& 2		& 2		& 2		& 2		& 2		& 2 \\
	    4
	    & -		& - 	& 0		& 1		& 1		& 1		& 1		& 1
	    & 1		& 1		& 1 	& 1  	& 1    	& 2  	& 2 \\
   		5
   		& - 	& -		& - 	& 0		& 1  	& 1		& 1		& 1
   		& 1 	& 1		& 1	 	& 1 	& 1 	& 1		& 1 \\
  		6
  		& - 	& -		& -		& -		& 0 	& 1 	& 1		& 1
  		& 1 	& 1		& 1		& 1		& 1		& 1 	& 1 \\
   		7
   		& - 	& -		& -		& -		& -		& 0 	& 1 	& 1
   		& 1		& 1		& 1		& 1		& 1		& 1		& 1 \\
  		\hline	
	\end{tabular}
	\caption{Values of $\lceil r(c,k) \rceil$ according to 
	Proposition~\ref{thm:fix_tree}.}
	\label{table2}
\end{table}

The strategy to prove this proposition the following. First we introduce an 
explicit coloring algorithm and then we show two of its properties
(see Lemmas~\ref{lem:combi} and~\ref{lem:max}).

We start by considering a finite tree $T$ with maximal valence 
$4 \leq k < \infty$.
We assume that we have $2 \leq c \leq k-2$ colors. For simplicity we write 
$\lbrace 0,1,2,3,...,c-1\rbrace$ for the \(c\) different colors
(in the following figures, $0$ is white, $1$ black and $2$ gray).

Furthermore, a $c$-coloring of a set of siblings \emph{optimal} if the maximal 
number of vertices with the same color is minimal.
Moreover,  a vertex $u$ is said to satisfy the \textit{distance condition}  
if there exists a leaf $w$ in $T_{u}$ of distance $d(u,w) \geq r(c,k)$.

\subsection{Coloring Algorithm}

We first assume that the center of $T$ consists of a single vertex \(v\).
We root $T$ in $v$ and color $v$ with color $0$. Let $n' \in \mathbb{N}$ be the 
maximal radius such that $S(n',v) \neq \emptyset$.
Now consider the following steps, each of which maybe processed several times..
	
\textit{Step 0:} If there exist indices $\ell$ in $\lbrace 1,\dots , n' \rbrace$ 
such that there still exist uncolored vertices in $S(\ell,v)$,
then let $n$ be the minimum of these indices and continue with Step~$1$.

Otherwise stop the coloring algorithm\footnote{In this step we always look after 
the lowest sphere where there are uncolored vertices, such that in the end we 
can guarantee that we colored all vertices.}.

\textit{Step~1}: If there exists an uncolored vertex in $S(n,v)$ call it $u$ and 
go to Step~2. Otherwise go back to Step~0.

\textit{Step~2:} If the vertex $u$ does not fulfill the distance condition, 
color $u$ with $0$ and go back to Step~1.
Otherwise continue with Step~3.
	
\textit{Step~3:} Note that the vertex $u$ satisfies the distance condition. 
Consider $u$ and all of its uncolored siblings 
\(\lbrace v_{1},\dots v_{r} \rbrace\) which fulfill the distance condition.
If this set is not empty, then color them optimally.
If there are no indistinguishable vertices within 
\(\lbrace u, v_{1},\dots v_{r} \rbrace\) with the same color, go back to Step~1. 
Otherwise continue the coloring in the following way 
(which is still part of Step~3):
	
\textit{Main Line Coloring:}
Let $V_j$ be the set of vertices among $\lbrace u,v_{1},\dots v_{r}\rbrace$ with 
color $j$, where at least two vertices have color $j$, i.e. $|V_{j}| > 1$ 
(vertices with a color that appears only once are clearly fixed).
	
We can assume that for all $ v \in V_j$, and for all $j$, there exists a 
leaf $w$ in $T_{v}$ of distance $d(v,w) \geq r(c,k)$.
Let $V_{j} = \{v_{1}^j, v_{2}^j, \dots, v_{m_j}^j\}$ for each of the colors 
$j = 1, \dots, \ell \leq c$ and do the following:
	
Consider (one of) the longest path(s) $R_i$ from $v_{i}^j$ to a leaf in 
$T_{v_{i}^j}$ for each of these vertices $v_{i}^j$, $i = 1, ..., m_j$.
We call the chosen paths $R_i$ \textit{main lines}.
To distinguish $\lbrace v_{1}^j, v_{2}^j, \dots, v_{m_j}^j \rbrace$ we color the 
main lines $R_i$ with pairwise different finite sequences of colorings.
	
For example, if $c = 2$, we color the paths with different 
``reverse binary colorings''. That means $R_1$ will be colored with $00000...$,
$R_2$ with $10000...$, $R_3$ with $01000...$, $R_4$ with $11000...$ and so on. 
If $c = 3$, we use ``reverse ternary colorings'', meaning that $R_1$ will be 
colored with $00000...$, $R_2$ with $10000...$, $R_3$ with $20000...$, $R_4$ 
with $01000...$ and so on, see Figure~\ref{fig:alg_coloring_ex1}.
	
\begin{figure}[h]
	\centering
	\includegraphics[scale=0.8]{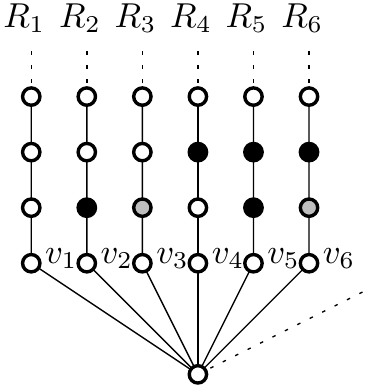}
	\caption{Coloring of main lines for $c = 3$.} \label{fig:alg_coloring_ex1}
\end{figure}
	
In general we use a reversed coloring based on the number system with base $c$.
Having done this for all $j \in \lbrace 1, \dots, \ell\rbrace$ continue with 
Step~4.

\textit{Step~4:} We consider all vertices $\lbrace w_{1}, \dots, w_{r} \rbrace$ 
produced in Step~3 that are part of one of the main lines and have valence 
$\geq 3$. That means we consider vertices that have at least one second 
up-neighbor, which is not in the main line.
Consider a vertex $w_{i}$. For simplicity we denote its up-neighbors 
(it has at least two) by $v_1$, $v_2$, $\dotsc$, $v_\ell$.
By construction only one vertex is in a main line, let it be $v_1$. Therefore, 
$v_1$ is already colored, whereas $v_2$, $\dotsc$, $v_\ell$ are uncolored. 
Let $a$ be the color of $v_1$.
We consider two cases:
	
\leftskip=1.5em
\textit{Case 1:} $\ell = 2$.
\leftskip=0in
	
Then color $v_2$ with $(a+1) \mod c$, see Figure~\ref{fig:thm_fix_tree_ex2} for 
an example, where $c = 3$.
	
\begin{figure}[h]
	\centering
	\includegraphics[scale=0.8]{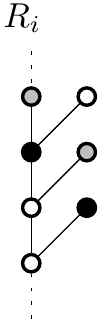}
	\caption{How to color unique siblings of vertices in a main line for 
	$c = 3$.}
	\label{fig:thm_fix_tree_ex2}
\end{figure}
	
If $c > 2$ continue with Step~4 for the next $w_i$. Otherwise do the following:

If the subtree $T_{v_2}$ contains only vertices of valence one or two (we say if 
there is no branching), color all vertices of $T_{v_2}-v_2$ with $1$.
Now, consider the case in which there is a branching in $T_{v_2}$. Color all 
vertices up to the branching in $T_{v_2}-v_2$ with $1$.
If there are~$2$ or~$3$ sibling vertices in that branching, color them all 
with~$1$. If the branching consists of four or more vertices, then assign them 
an optimal coloring, see Figure~\ref{fig:alg_coloring_ex2}. Restart Step~4 for 
the next $w_i$.
	
\begin{figure}[h]
	\centering
	\includegraphics[scale=0.8]{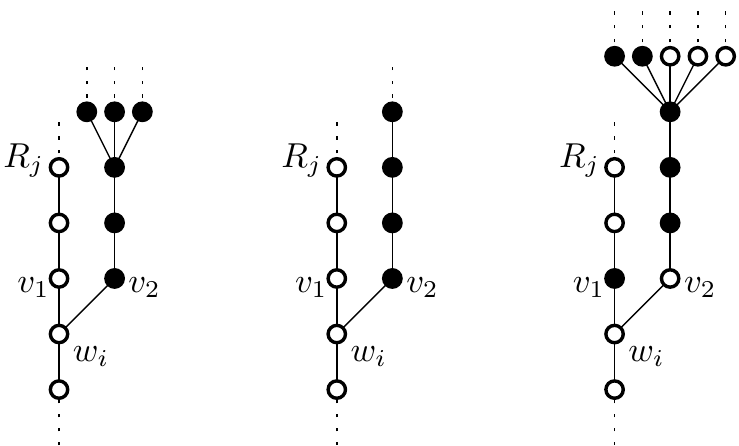}
	\caption{Coloring of secondary lines for $c = 2$.} 
	\label{fig:alg_coloring_ex2}
\end{figure}
	
\leftskip=1.5em
\textit{Case 2:} $3 \leq \ell \leq k - 1$.
\leftskip=0in
	
Color $v_2, v_3, \dotsc, v_\ell$ with an optimal coloring considering two 
additional restrictions:
	
First, we do not use the color $a$.
Second, if there exists $j \in \lbrace 2, \dots, \ell \rbrace$ such that $v_j$ 
has the color $b \in \lbrace 0, \dots, c-1 \rbrace \setminus \lbrace a \rbrace$, 
then there exists $j' \in \lbrace 2, \dots, \ell \rbrace$, $j \neq j'$,
such that $v_{j'}$ also has color $b$. This means that a color is always used at 
least twice, see Figure~\ref{fig:alg_coloring_ex3}.
Note  that $v_1$ is the only vertex among the siblings whose color appears just 
once.

Having done this for all $w_{1}, ..., w_{k}$ check whether there are 
indistinguishable vertices of the same color satisfying the distance condition
(as an example consider the case that there is a branching in  the subtree 
$T_{v_2}$ when $c = 2$ or within $\{v_2, v_3, \dotsc, v_\ell\}$), then repeat 
the Main Line Coloring method and apply Step~4 to these vertices. If there are 
no colored indistinguishable vertices, continue with Step~1.
 	
\begin{figure}[h]
	\centering
	\includegraphics[scale=0.8]{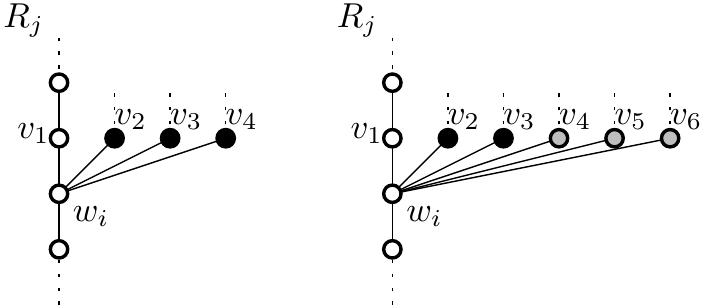}
	\caption{Examples of how to color the four (left) or the six (right) 
	up-neighbors of a vertex in some main line, with $c \geq 3$.} 
	\label{fig:alg_coloring_ex3}
\end{figure}
	
This completes the algorithm for the case that the center of $T$ is a 
single vertex.
	
Now, assume that the center of \(T\) is not a vertex but a an edge \(uv\). Let 
$T_u$ be the tree containing $u$ in $T - v$ and $T_v$ be the tree containing $v$ 
in $T - u$. Color \(u\) with $1$, \(v\) with $0$, and proceed with the coloring 
of $T_u$ and $T_v$ as explained above for trees whose center consists of a 
single vertex.

\subsection{Proof of the Main Theorem}

\begin{lemma} \label{lem:combi}
	Let $V$ be a set of $t$ vertices. Assume they are colored with an optimal 
	coloring consisting of $j$ colors, with the additional restriction
	that every color appears at least twice. Then, the maximal number $p$ of 
	vertices with the same color in $V$ is
	$\max\left\{3, \left\lceil\frac{t}{j}\right\rceil\right\}$.
\end{lemma}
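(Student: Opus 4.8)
The statement is purely combinatorial, so the plan is to forget the tree entirely and regard it as a question about partitioning an integer: an assignment of the $t$ vertices to $j$ colors, each color used at least twice, is the same as a composition $t = a_1 + \dots + a_j$ with every $a_i \ge 2$, and the quantity $p$ to be determined is $\min \max_i a_i$ over all such compositions (this minimum is what the word \emph{optimal} encodes). I would then prove the two inequalities $p \ge \max\{3,\lceil t/j\rceil\}$ and $p \le \max\{3,\lceil t/j\rceil\}$ separately.

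For the lower bound I would use a direct pigeonhole argument: if every color class had at most $\lceil t/j\rceil - 1$ vertices, the $j$ classes would account for at most $j(\lceil t/j\rceil-1) < t$ vertices, a contradiction; hence some class has at least $\lceil t/j\rceil$ vertices, giving $p \ge \lceil t/j\rceil$. The constant $3$ in the bound comes from the sparse regime, where $\lceil t/j\rceil$ is itself small; there I would argue that the at-least-twice constraint prevents the classes from being split finely enough, so that once this constraint becomes active the maximum cannot drop under the stated value.

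For the upper bound I would exhibit an explicit \emph{equitable} coloring. Writing $t = qj + s$ with $0 \le s < j$ and $q = \lfloor t/j\rfloor$, I color $s$ classes with $q+1$ vertices and the remaining $j-s$ classes with $q$ vertices; this realizes $\max_i a_i = \lceil t/j\rceil$. It remains to check the side condition $a_i \ge 2$, i.e.\ $q \ge 2$, which holds exactly when $t \ge 2j$ --- precisely the range in which a $j$-coloring with every color doubled can exist at all. In the boundary range $2j \le t \le 3j$, where $\lceil t/j\rceil \le 3$, the term $\max\{3,\cdot\}$ supplies the slack needed to absorb the coarsening forced by the doubling requirement, so the equitable coloring stays within $\max\{3,\lceil t/j\rceil\}$; for $t > 3j$ the value $\lceil t/j\rceil$ already exceeds $3$ and dominates.

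The main obstacle, and the only delicate point, is the interaction between the two constraints --- minimizing the maximum while keeping every color class of size at least two --- in the sparse regime where $t$ is close to $2j$. This is exactly the place where the naive equitable partition and the doubling requirement pull against each other, and it is what forces the constant $3$ into the formula; away from this regime the pigeonhole lower bound and the equitable upper bound meet at $\lceil t/j\rceil$ and the result is immediate.
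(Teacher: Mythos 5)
Your proposal has a genuine gap, and it starts with a misreading of the side condition. In the paper, ``every color appears at least twice'' is a constraint on the colors actually \emph{used}: a valid coloring may use fewer than $j$ colors, as long as no color class has size exactly one. Your reformulation as a composition $t = a_1 + \dots + a_j$ with every $a_i \ge 2$ forces all $j$ colors to be used, so you declare the regime $t < 2j$ infeasible and construct nothing there. But that sparse regime is not vacuous; it is precisely the situation in which the lemma gets applied (in Lemma~\ref{lem:max}, Situation~2, one colors as few as two or three siblings with $c-1$ colors), and it is exactly where the constant $3$ earns its place. The paper's proof handles it by an explicit construction your proposal lacks: color the vertices in pairs when $t$ is even (giving $p=2$), and when $t$ is odd set one vertex aside, pair the rest, and put the spare vertex into an existing pair, giving one triple and hence $p = 3$.

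The second problem is that you set out to prove an equality, and the lower-bound half cannot be completed. When $t$ is even and the doubling constraint is active --- already at $t = 2j$, which is feasible even in your composition model --- the all-pairs coloring achieves maximum class size $2$, so $p \ge 3$ is false; your claim that ``once this constraint becomes active the maximum cannot drop under the stated value'' fails for every even $t$ in that range (only for odd $t$ does a parity argument force a class of size $3$). The lemma is correct, and is both proved and used by the paper, only as an \emph{upper} bound: the paper's own proof explicitly notes $p = 2$ in the even sparse case and concludes with ``an upper bound for $p$ is always $\max\left\{3, \left\lceil t/j\right\rceil\right\}$.'' Your pigeonhole bound $p \ge \lceil t/j \rceil$ and your equitable partition for $t \ge 2j$ are both fine (the latter is essentially the paper's construction in the dense case), but the proposal as a whole proves the wrong statement in the dense regime and no statement at all in the sparse one.
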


\begin{proof}
	If $t$ is even and $j \geq \frac{t}{2}$, then there are enough colors such 
	that the vertices are colored pairwise differently, meaning that $p = 2$.
	If $t$ is even but $j < \frac{t}{2}$, we are forced to use every possible 
	color, and every color has to be used at least twice.
	Thus, the restriction is automatically fulfilled and 
	$p = \left\lceil\frac{t}{j}\right\rceil$.
	
	Now, consider the case when $t$ is odd. We first ignore one vertex and 
	proceed as in the even case for the remaining $t - 1$ vertices.
	The ignored vertex then has to get the same color as one of the remaining 
	vertices due to the additional restriction.
	That means
	\begin{align*}
		\text{if } j \geq \frac{t-1}{2}, 
			\text{ then } p &= 2 + 1 = 3, \text{ and }\\
		\text{if } j < \frac{t-1}{2}, 
			\text{ then } p &= \left\lceil\frac{t-1}{j}\right\rceil + 1.
	\end{align*}
	The $+1$ arises from the vertex that was first ignored. It is needed in the 
	case where each color is used equally often.
	It is easy to verify that 
	$\left\lceil\frac{t}{j}\right\rceil \geq 
	\left\lceil\frac{t-1}{j}\right\rceil + 1$ 
	for every possible pair of $t$ and $j$.
	Therefore an upper bound for $p$ is always 
	$\max\left\{3, \left\lceil\frac{t}{j}\right\rceil\right\}$.
\end{proof}

\begin{lemma} \label{lem:max}
	Let $T$ be a finite tree with maximal valence $0 < k < \infty$. 
	If $T$ is colored as described in the Coloring Algorithm
	where $c \geq 2$, then the largest number of sibling vertices fulfilling the 
	distance condition and having the same color that can appear is
	$\max\left\{3, \left\lceil\frac{k-2}{c-1}\right\rceil\right\}$.
\end{lemma}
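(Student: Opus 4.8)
The plan is to bound the number of same-colored siblings fulfilling the distance condition by tracking exactly where in the Coloring Algorithm such siblings acquire their colors. The key observation is that there are only two places in the algorithm where a set of siblings is colored \emph{optimally} with the restriction that every color is used at least twice: namely in Step~3 (the initial optimal coloring of a set of siblings satisfying the distance condition) and in Case~2 of Step~4 (the optimal coloring of the up-neighbors $v_2,\dots,v_\ell$ of a main-line vertex, where color $a$ is forbidden and every used color appears at least twice). Siblings colored in Step~2 all receive color $0$, but those do not satisfy the distance condition, so they are excluded from consideration. Hence the worst case is governed by whichever of these two optimal-coloring steps can produce the most repetitions.

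First I would handle Step~3. Here a vertex $u$ has at most $k-1$ up-neighbors (since it has one down-neighbor towards $v$, except possibly the root, which I would treat as a minor boundary case), so the set of siblings satisfying the distance condition has size $t \leq k-1$. Applying Lemma~\ref{lem:combi} with $j = c$ colors gives a maximum repetition of $\max\{3, \lceil (k-1)/c\rceil\}$. I would then check that this is dominated by the claimed bound $\max\{3, \lceil (k-2)/(c-1)\rceil\}$, which should follow from a routine inequality comparison since $(k-2)/(c-1) \geq (k-1)/c$ whenever $k \geq c+1$, a relation guaranteed by the standing hypothesis $c \leq k-2$.

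Next I would analyze Case~2 of Step~4, which I expect to be the binding constraint and hence the main obstacle. A main-line vertex $w_i$ has at most $k$ neighbors: one down-neighbor and up to $k-1$ up-neighbors, of which exactly one ($v_1$) lies on the main line and is already colored $a$. So the uncolored up-neighbors $v_2,\dots,v_\ell$ number $t = \ell - 1 \leq k - 2$, and they are colored optimally using the $c-1$ colors in $\{0,\dots,c-1\}\setminus\{a\}$ with each used at least twice. Invoking Lemma~\ref{lem:combi} with $t \leq k-2$ and $j = c-1$ yields precisely $\max\{3, \lceil (k-2)/(c-1)\rceil\}$, matching the statement. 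The delicate point is that $v_1$'s color $a$ appears only once among the siblings (as the algorithm explicitly notes), so $v_1$ is never part of a repeated-color class and need not be counted; I would make this explicit to confirm that no additional repetition beyond the Lemma~\ref{lem:combi} bound arises.

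Finally I would argue that these are the only two sources of same-colored distance-condition siblings, so the overall maximum is the larger of the two bounds, which is the Case~2 bound $\max\{3, \lceil (k-2)/(c-1)\rceil\}$. I would also address the special sub-cases of Case~1 ($\ell = 2$), where a single uncolored up-neighbor $v_2$ is colored $(a+1)\bmod c$ and the branching-coloring for $c=2$ is used; here at most $2$ or $3$ siblings share a color by construction, which is absorbed into the $\max\{3,\cdot\}$ term and thus never exceeds the stated bound. Assembling these observations gives the claimed value, completing the proof.
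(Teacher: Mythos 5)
Your overall strategy coincides with the paper's: enumerate the places in the Coloring Algorithm where same-colored siblings satisfying the distance condition can arise, bound each occurrence (via Lemma~\ref{lem:combi} where the every-color-twice restriction applies), and take the maximum. Your treatment of Step~3 and of Case~2 of Step~4 matches the paper's Situations~1 and~2. However, your dismissal of Case~1 of Step~4 for $c=2$ contains a genuine error. You claim that in the branching situation ``at most $2$ or $3$ siblings share a color by construction.'' That is only true for branchings of two or three vertices, which the algorithm colors monochromatically with color~$1$. When the branching consists of four or more vertices, the algorithm assigns them an \emph{optimal} $2$-coloring, and since such a branching can contain up to $k-1$ siblings, this produces up to $\left\lceil\frac{k-1}{2}\right\rceil$ siblings of the same color --- which exceeds $3$ as soon as $k\geq 8$. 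So this case is not ``absorbed into the $\max\{3,\cdot\}$ term'' for free; it needs its own comparison. The paper treats it as a separate Situation~3 with bound $s_3=\max\left\{3,\left\lceil\frac{k-1}{2}\right\rceil\right\}$ and then verifies $s_3\leq\max\{3,k-2\}$ (the claimed bound when $c=2$), which holds for $k\geq 4$ because $\left\lceil\frac{k-1}{2}\right\rceil\leq k-2$ once $k\geq 3$. Your final conclusion survives, but the justification you give for this case is false as stated.

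A second, smaller point: in Step~3 the siblings in question may be children of the root $v$, which has no down-neighbor and hence up to $k$ (not $k-1$) children; you flag this as a ``minor boundary case,'' but your inequality $\frac{k-2}{c-1}\geq\frac{k-1}{c}$ does not cover it. With $t\leq k$ vertices one needs $\left\lceil\frac{k}{c}\right\rceil\leq\max\left\{3,\left\lceil\frac{k-2}{c-1}\right\rceil\right\}$, and this requires the case split the paper makes explicitly: $\frac{k-2}{c-1}\geq\frac{k}{c}$ holds exactly when $k\geq 2c$, while for $k<2c$ one instead uses $\left\lceil\frac{k}{c}\right\rceil\leq 2\leq 3$. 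Both repairs are routine, but as written your argument has a false step in the $c=2$ branching case and an unresolved one at the root.
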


\begin{proof}
	Within our Coloring Algorithm there are three situations in which sibling 
	vertices with the same color fulfilling the distance condition might appear.

	\textit{Situation~1:} If we apply an optimal coloring to $m$ vertices, 
	we obtain a maximum of $\left\lceil\frac{m}{c}\right\rceil$
	vertices with the same color. For a tree with maximal valence $k$ it is 
	bounded by $s_1 = \left\lceil\frac{k}{c}\right\rceil$.

	\textit{Situation~2:} In Step 4 case 2 of the Coloring Algorithm, we use an 
	optimal coloring with $c-1$ colors such that each
	color appears at least twice. By applying Lemma~\ref{lem:combi}, we obtain a 
	maximum number
	$s_2 = \max\left\{3, \left\lceil\frac{k-2}{c-1}\right\rceil\right\}$ of 
	sibling vertices with the same color in that case.

	\textit{Situation~3:} Consider the case $c = 2$ in Step 4 Case 1 of the 
	algorithm where we are in the branching situation. There are at most 
	$s_3 = \max\{3,\lceil\frac{k-1}{2}\rceil\}$ vertices with the same color. 
	We see that $s_2 \geq s_3$ for $c = 2$ and $k \geq 4$.

	It remains to compute the maximum of $s_1$ and $s_2$.
	Straight forward calculations show that
	\begin{align*}
		k < 2c 
		& \quad \Longleftrightarrow \quad \frac{k-2}{c-1} < \frac{k}{c} < 2.
	\end{align*}
	Therefore, for $k < 2c$ the maximum is $3$, while for $k \geq 2c$ it is 
	$\frac{k-2}{c-1}$.
	Thus, the maximum of $s_1$ and $s_2$ is 
	$\max\left\{3, \left\lceil\frac{k-2}{c-1}\right\rceil\right\}$.
\end{proof}

We are now able to prove Proposition~\ref{thm:fix_tree}.

\begin{proof}[Proposition \ref{thm:fix_tree}]
	The cases $k \in \{0, 1, 2\}$ and $c \geq k$ are trivial.
	For $c = k-1$, we refer to Lemma~\ref{co:k-1_coloring} in 
	Section~\ref{sec:finite_trees}.
	So, assume that we have $2 \leq c \leq k-2$ and $k \geq 4$.
	
	\smallskip
	
	First, we consider a finite tree $T$ and apply the Coloring Algorithm.
	Assume that the center of $T$ consists of a single vertex $v$. This vertex 
	is fixed by each automorphism of $T$.
	Thus, it remains to show that for all \(n\in \mathbb{N}\) the vertices with 
	the same color in $S(n,v)$  which satisfy the distance condition
	are indistinguishable due to our algorithm.

	The distance $r(c,k)$ is built upon the maximal number of  indistinguishable 
	vertices with the same color that can appear in the same sphere and
	the given Main Line Coloring in the algorithm. The aim of these main lines 
	is to fix indistinguishable siblings that have  the same color and
	the vertices on these main lines themselves.

	Let $n' \in \mathbb{N}$ be the smallest index such that there exist 
	indistinguishable sibling vertices with the same color
	$v_1,\dots, v_m$, $m \geq 2$, in $S(n',v)$ which fulfill the distance 
	condition.

	Consider $v_1,\dotsc, v_m$ and their main lines $R_i$ given by Step~3 of the 
	algorithm. We see that if each $v_i$ has distance at least
	$\log_c m$ to a leaf in $T_{v_i}$, then the main lines \(R_i\) are colored 
	pairwise differently. Due to Lemma~\ref{lem:max} this distance is
	bounded by 
	$ \log_c \max\left\{3, \left\lceil\frac{k-2}{c-1}\right\rceil\right\}$. 
	Since this coincides with our distance condition regarding
	the given $r(c,k)$, for any color-preserving automorphism $\alpha$ of $T$  
	with $\alpha(v_i)=v_j$ and $\alpha(R_i)=R_j$ for some
	$i,j\in \{ 1,\dotsc, m\}$, we see that $i=j$. This means that the main lines 
	are not interchangeable.
	
	Next we argue why a main line $R_i$ cannot be mapped to \textit{any} other 
	string of some $T_{v_j}$. By a string we mean a path, that has at most one 
	vertex of each sphere. Therefore we show that there is no
	automorphism $\alpha$ such that $\alpha(T_{v_i})=T_{v_j}$ for any 
	$i,j\in\{1,\dotsc, m\}$.

	Without loss of generality we consider $T_{v_i}$ and $T_{v_j}$, $i \neq j$ 
	and assume that there exists at least one vertex in $R_i$ resp. $R_j$
	with at least two up-neighbors (otherwise $T_{v_i}$ and $T_{v_j}$ are 
	stabilized by the Main Line Coloring method).

	\textit{Case 1:} There exists a vertex $w_0$ in $R_i$ with one up-neighbor 
	$w_1$ in $R_i$ and at least two more up-neighbors, called
	$w_2, \dots , w_\ell$, $\ell \geq 3$.
	Let us assume that there exists a color-preserving automorphism $\alpha$ of 
	$T$ that maps $T_{v_j}$ to $T_{v_i}$. By the Main Line Coloring method
	we know that $\alpha(R_i) \neq \alpha(R_j)$. Thus, there exists a vertex 
	$\tilde{w} \in R_j$ (in the same sphere as $w_1, \dotsc, w_{\ell}$)
	and $k \in \lbrace 2, \dots ,\ell \rbrace$ such that 
	$\alpha(\tilde{w}) = w_{k}$. See Figure~\ref{fig:thm_fix_tree_ex1}.

	\begin{figure}[h]
		\centering
		\includegraphics[scale=0.8]{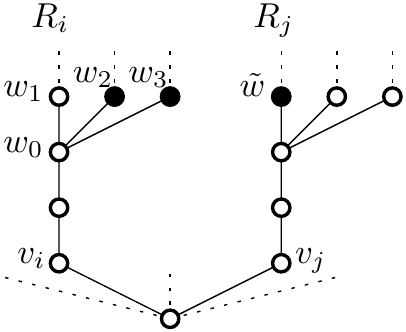}
		\caption{Example of $w_0$ in $R_i$ with three up-neighbors. 
		The vertex $\tilde{w}$ has the same color as $w_2$ and $w_3$.} 
		\label{fig:thm_fix_tree_ex1}
	\end{figure}

	Due to Step~4, Case~2, in the algorithm, $\tilde{w}$ does not have a sibling 
	vertex with the same color, whereas $w_k$ for sure has.
	So, $\alpha$ cannot map $\tilde{w}$ to $w_k$, and we have a contradiction. 
	We conclude that $\alpha$ does not exists.

	\textit{Case 2:} There exists a vertex $w_0$ in $R_i$ with one up-neighbor 
	$w_1$ in $R_j$ and exactly one additional up-neighbor, called $w_2$.

	Let us again assume that there exists a color-preserving automorphism 
	$\alpha$ of $T$ that maps $T_{v_j}$ to $T_{v_i}$. By main line
	coloring we know that $\alpha(w_1) \notin R_j$. Thus there exits a vertex 
	$\tilde{w} \in R_j$ such that $\alpha(w_{2}) = \tilde{w}$.

	\textit{Case 2.1:} Let $c \geq 3$.
	If $\alpha$ swaps $\tilde{w}$ and $w_{2}$, then $\alpha$ also swaps $w_1$ 
	and the unique sibling of $\tilde{w}$. But if $\tilde{w}$ and
	$w_{2}$ have the same color, their siblings do not, because of the  shifting 
	of the colors modulo $c$ in Case~1 of Step~4 of the algorithm.
	Thus, such an $\alpha$ does not exist.
	Note,  here it is important that we assume that $c \geq 3$. For $c = 2$, we 
	only have the color pairs $(0,1)$ and $(1,0)$ which cannot be distinguished.

	\textit{Case 2.2:} Let $c = 2$.
	If the subtree $T_{w_2}$ contains only vertices of valence one or two (no 
	branching), all vertices of $T_{w_2}$ (except maybe $w_2$) are
	colored with $1$. In contrast to this, at least the last vertex of $R_j$ 
	(the leaf) is colored with~0. This is due to the given $r(2,k)$,
	see Figure \ref{fig:alg_coloring_ex3} (here we need the +1 in the case where 
	$r(c,k) = \log_2(\max\{3, k - 2\}) + 1$). 
	Thus, such an $\alpha$ does not exist.
	
	\begin{figure}[h]
		\centering
		\includegraphics[scale=0.8]{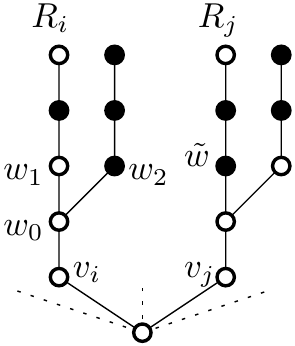}
		\caption{The distance $r(2,k)$ guarantees that the last vertex of a main 
		line is always white if $c = 2$.} \label{fig:thm_fix_tree_ex3}
	\end{figure}

	Now, assume there is a branching in $T_{w_2}$. By our algorithm, we have 
	avoided that there exist single vertices of a
	certain color in such a branching (see the left and the right picture in 
	Figure~\ref{fig:alg_coloring_ex2}) while in every branching in the
	main line of $R_j$ there exist a vertex (the vertex in the main line itself) 
	with color $d$ that is the only vertex with this color in that
	branching\footnote{Coloring the neighbors of a  vertex in $R_j$ we excluded 
	the use of the color of this particular vertex.}.
	
	All in all, we showed that  $v_{1}, \dots, v_{m}$ are fixed.
	Now, we can use a final inductive argument. Assume that all vertices with 
	the same color up to the sphere $S(n,v)$ are fixed.
	Consider indistinguishable sibling vertices that fulfill the distance 
	condition in the sphere $S(n+1,v)$. At this point,
	we can apply the same argument as above to ensure that they have to be 
	fixed. Note that here it is important to assume that
	everything below these vertices is already fixed.

	If the center of $T$ consists of an edge \(uv\), color $u$ with $0$ and $v$ 
	with $1$. Then \(u\) and \(v\) are fixed by the properties of a
	tree and we can apply the Coloring Algorithm to the remaining vertices in 
	the two subtrees containing $u$ and $v$ after removing the
	edge $uv$ rooted in, respectively, $u$ and $v$. Then, we use the same 
	reasoning  as above. This completes the proof for the finite case.

	Now, let $T$ be an infinite, locally finite tree.
	We first assume that $T$ has at least two ends. In that case consider the 
	trunk $T^C$ of $T$ which is a locally finite, infinite tree without leaves.
	Since it is unique any automorphism of $T$ leaves $T^C$ invariant.
	By~\cite{wazh-2007} we know that $T^C$ is $2$-distinguishable.
	Now, for every vertex $u$ in the trunk we consider the subtree $T_u$ that 
	contains, as explained above, the vertex $u$ together with all
	finite subtrees of $T-u$.
	We apply the Coloring Algorithm to each of these subtrees and fix all 
	vertices which fulfill the distance condition.
	
	If $T$ has only one end, then there exists a vertex $v$ with valence $1$. 
	Let $R$ be the ray with root $v$. We color all vertices of $R$
	with $0$. For each vertex in $R$ we color the maximal $k-2$ neighbors not in 
	$R$ with an optimal coloring with the $c-1$ colors different
	from $0$. Thus, there are at most $\lceil \frac{k-2}{c-1}\rceil$ vertices 
	with the same color.
	If there are indistinguishable vertices we apply the Coloring Algorithm from 
	above to fix all vertices which fulfill the distance condition.
\end{proof}

\begin{proof}[Main Theorem]
	If $c = k$, we have enough colors to fix all vertices of the tree, 
	so $r = 0$. If $c = k - 1$, we refer to Theorem~\ref{thm:inftree} and 
	Lemma~\ref{co:k-1_coloring} in Section~\ref{sec:finite_trees}.
	Assume now that $c = 2$ and that we have an integer $r$ satisfying 
	$3 < k \leq 2^{r-1}$. Then $\log_2k \leq r - 1$. Since $3 < k$, we conclude 
	that $\log_{2}(\max \left\{3, k-2\right\}) + 1<r$. The claim of the Theorem 
	follows then by Proposition~\ref{thm:fix_tree}.
	Consider now the remaining case that $c > 2$ and $r$ is an integer such that 
	$3 < k\leq c^r(c-1)+2$. Then we have $\frac{k-2}{c-1}\leq c^r$. Since $c^r$ 
	is an integer, it follows that $\log_c \lceil\frac{k-2}{c-1}\rceil \leq r$. 
	If $\lceil \frac{k-2}{c-1}\rceil \geq 3$ the claim follows by 
	Proposition~\ref{thm:fix_tree}. Furthermore we have 
	$\log_c \frac{1}{c-1} < r$, since $3 < c^r(c-1) + 2$, which implies $1 < r$. 
	Thus $\log_c 3 < r$ and the claim follows again by 
	Proposition~\ref{thm:fix_tree}.
\end{proof}

We end with an example that shows that the given constant $r(c,k)$ in 
(\ref{ali:definition_r(k)}) is tight in some special cases.

\begin{example}\label{ex:max_val_7}
	Consider a tree as in Figure \ref{fig:ex_tight} with maximal valence 
	$k = 10$, which we would like to color with $c = 3$ colors. Without a  
	coloring the vertices in the first sphere  are indistinguishable.
	Using an optimal coloring for these vertices there are 
	$\lceil \frac{10}{3} \rceil = 4$ vertices $v_1, \dotsc, v_4$ with 
	the same color.
	Thus in our algorithm they are starting points of main lines of length two 
	and we can distinguish them in that way, see Figure~\ref{fig:ex_tight}. 
	Clearly, it is not possible to distinguish $v_1, \dotsc, v_4$ by using only  
	$3$-colors in the next sphere, but we can fix them by coloring all vertices 
	of the next two spheres. That is what  the upper bound
	\(r(3,10) =  \log_{3}\max\{3,\frac{10-2}{2}\}\approx 1.26\) yields, which 
	means we need at least distance~$2$.
	
	\begin{figure}[h]
		\centering
		\includegraphics[scale=0.8]{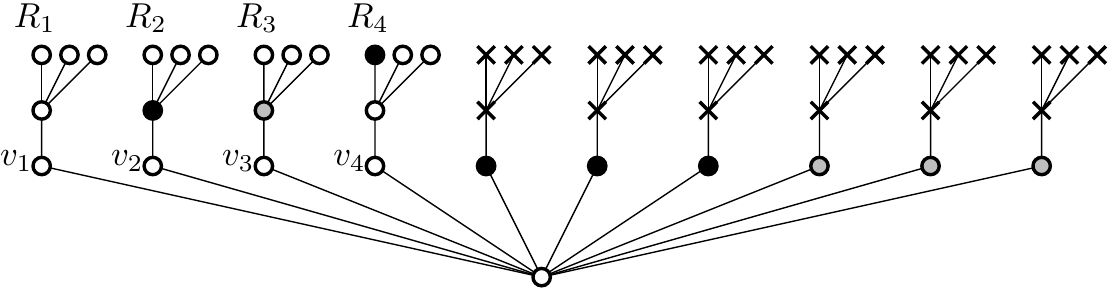}
		\caption{Example of a $3$-colored tree with maximal valence $k = 10$.} 
		\label{fig:ex_tight}
	\end{figure}
\end{example}

One easily deduces that the number of vertices which can be distinguished by the 
given coloring in Proposition~\ref{thm:fix_tree} depends on the structure
of the graph. Especially it depends on the number and the distribution of the 
leaves. Of course there are enough cases where it is possible to
distinguish considerably more vertices through a coloring than provided by our 
theorem (e.g. if a lot of vertices are fixed by the structure of the tree).
But for a general result one has to consider the critical cases as in 
Example~\ref{ex:max_val_7} where we have seen that the bound is tight.

We always considered the center of a finite tree as a fixed starting vertex in 
our Coloring Algorithm. But one also could take another vertex of the graph, 
that is fixed by all automorphisms. In some cases this may lead to colorings, 
that fix more vertices of the graph than in the case where we start the coloring 
algorithm in the center.

\begin{acknowledgement}
	The authors acknowledges the support of the Austrian Science Fund (FWF) 
	W1230 and of the Award 317689 of the Simons Foundation.
\end{acknowledgement}

\end{document}